\newcommand\imCMsym[4][\mathord]{%
	\DeclareFontFamily{U} {#2}{}
	\DeclareFontShape{U}{#2}{m}{n}{
		<-6> #25
		<6-7> #26
		<7-8> #27
		<8-9> #28
		<9-10> #29
		<10-12> #210
		<12-> #212}{}
	\DeclareSymbolFont{CM#2} {U} {#2}{m}{n}
	\DeclareMathSymbol{#4}{#1}{CM#2}{#3}
}
\CompileMatrices\SelectTips{cm}{12}
\theoremstyle{plain}
\newtheorem{Thm}{\sc Theorem}[section]
\newtheorem{Theorem}[Thm]{\sc Theorem}
\newtheorem{Corollary}[Thm]{\sc Corollary}
\newtheorem*{Corollary*}{\sc Corollary}
\newtheorem{Proposition}[Thm]{\sc Proposition}
\newtheorem*{Proposition*}{\sc Proposition}
\newtheorem{Lemma}[Thm]{\sc Lemma}
\theoremstyle{definition}
\theoremstyle{remark}
\newtheorem{Remark}[Thm]{Remark}
\newtheorem*{Example*}{Example}
\newtheorem*{Remark*}{Remark}
\newcommand{\CC}{{\mathbb C}}
\newcommand{\ZZ}{{\mathbb Z}}
\newcommand{\QQ}{{\mathbb Q}}
\newcommand{\RR}{{\mathbb R}}
\newcommand{\cF}{{\mathcal F}}
\newcommand{\cO}{{\mathcal O}}
\newcommand{\cT}{{\mathcal T}}
\newcommand{\Pic}{{\mathop{\rm Pic \, }}}
\newcommand{\Coh}{{\mathop{\operatorname{Coh}\, }}}
\newcommand{\ch}{{\mathop{\rm ch \, }}}
\newcommand{\im}{\mathop{\rm im \, }}
\newcommand{\Refl}{{\mathop{\rm Ref\,}}}
\newcommand{\td}{{\mathop{\rm td \, }}}
\newcommand{\Td}{{\mathop{\rm Td \, }}}
\newcommand{\Spec}{{\mathop{{\rm Spec\, }}}}
\newcommand{\Sing}{{\mathop{{\rm Sing \,}}}}
\begin{document}

\markboth {\rm }{}

\title{Bridgeland stability conditions on normal surfaces}
\author{Adrian Langer} \date{\today}

%%%
\maketitle
%%%

%%%%%%%%%%%%%%%%%%%%%%%%%%%%%%%%%%

{\footnotesize
{\noindent \sc Address:}\\
Institute of Mathematics, University of Warsaw,
ul.\ Banacha 2, 02-097 Warszawa, Poland\\
e-mail: {\tt alan@mimuw.edu.pl}
}

\medskip

\begin{abstract}
	We prove a new version of Bogomolov's inequality on normal proper surfaces. This allows to construct Bridgeland's stability condition on such surfaces. In particular, this gives the first known examples of stability conditions on non-projective, proper schemes. 
\end{abstract}

\medskip

{2020 \emph{Mathematics Subject Classification.} Primary 14F08; Secondary 14C40, 14J17, 14J60}

\section*{Introduction}

Let  $X$ be a separated scheme locally of finite type over a fixed base field.
Let $K_0(X)$ be the Grothendieck group of coherent $\cO_X$-modules and let
$A_*(X)$ be the Chow group of $X$
(i.e., the quotient of the  group of cycles modulo rational equivalence).  
Then  there exists a homomorphism
$$\tau_X: K_0(X)\to A_{*}(X)_{\QQ},$$
which in the case of smooth $X$ coincides with the homomorphism $\alpha\to \ch(\alpha)\cap \Td (X)$
(see \cite[Theorem 18.3]{Fu} for more details). However, in general one cannot define any  (additive) homomorphism $\ch : K_0(X)\to A^{*}(X)_{\QQ}$ to the (rational) operational Chow ring $A^{*}(X)_{\QQ}=A^*(X\to X)\otimes \QQ$ so that $\tau _X (\alpha)= \ch(\alpha)\cap \Td (X)$
for all $\alpha \in K_0(X)$.
Such ``Chern character'' does not exist even on normal surfaces. But in this special case one can use Mumford's intersection theory of Weil divisors on a normal surface $S$ to define a substitute
that corresponds to the evaluation $\ch \cap [S]$. More precisely, we define \emph{Mumford's Chern character} 
$$ \ch^M=(\ch_0^M, \ch_1^M, \ch_2^M): K_0(S)\to A_*(S)_{\QQ}$$
so that for any $\alpha \in K_0(S)$ it satisfies the Riemann--Roch formula
$$\tau _S(\alpha)=[S]+ \left( \ch_1^M(\alpha ) -\frac{1}{2}K_{S} \right) +
\ch_2^M(\alpha)-\frac{1}{2}\ch_1^M(\alpha)\cdot   K_{S}+\ch _0^M (\alpha)\, \Td _2 ({S}),$$
where  $\ch_1^M(\alpha)\cdot   K_{S}$ denotes Mumford's intersection product of classes of Weil divisors.
It is easy to define $\ch_0^M$ and $\ch_1^M$. Then one can use the above formula to define the missing piece $\ch_2^M$. We study properties of this Chern character and in particular, we show how to compute it using a resolution of singularities of $S$.  

Using $\ch ^M$ we define \emph{Mumford's discriminant} of a coherent $\cO_S$-module $E$ by the formula
$$ \Delta^M(E):=\ch_1^M(E)^2-2\ch_0^M(E) \cdot \ch_2^M(E) .$$
This discriminant satisfies the following Bogomolov type inequality:
 
\begin{Theorem}\label{main}
Let $S$ be a normal proper surface over an algebraically closed field.
Then there exists an explicit constant $C_{S}$, depending on the singularities  and the birational equivalence class of $S$, satisfying the following condition. For any numerically non-trivial nef Weil divisor $H$ on $S$ and any slope $H$-semistable torsion free coherent $\cO_S$-module $E$ we have
$$\int_S \Delta^M(E) +C_{S}\,\ch_0 ^M(E)^2\ge 0.$$
Moreover, if $S$ has only rational Gorenstein singularities then $C_S$ does not depend on the singularities of $S$.
\end{Theorem}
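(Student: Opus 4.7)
The plan is to reduce to the classical Bogomolov inequality on a smooth surface via a resolution of singularities. Let $\pi: \tilde S \to S$ be a resolution with exceptional divisor $\sum E_i$, whose intersection matrix $(E_i \cdot E_j)$ is negative definite. Given a torsion-free slope $H$-semistable $\cO_S$-module $E$, I would lift it to a torsion-free coherent $\cO_{\tilde S}$-module $\tilde E$, for instance $(\pi^* E)/\text{torsion}$ or its reflexive hull. Since $\tilde S$ is smooth, $\tilde E$ has a standard Chern character, and the resolution-based computation of $\ch^M$ developed earlier in the paper identifies $\ch^M(E)$ with the pushforward of $\ch(\tilde E)$ up to corrections supported on $\sum E_i$.

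The second step is to transfer semistability. The pullback $\pi^* H$ is nef and numerically non-trivial on $\tilde S$, and a standard pushforward-and-saturation argument shows that $\tilde E$ is slope $\pi^*H$-semistable because $E$ is. I would then apply a Bogomolov-type inequality on the smooth surface $\tilde S$ with respect to the nef divisor $\pi^*H$, yielding
$$\int_{\tilde S} \Delta(\tilde E) + c(\pi^*H)\cdot \ch_0(\tilde E)^2 \ge 0$$
for a constant $c(\pi^*H)$ depending only on numerical data of $\pi^*H$. This is classical when $(\pi^*H)^2>0$; when $(\pi^*H)^2=0$ one perturbs $\pi^*H$ by a small multiple of an ample class and takes a limit.

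The final, crucial step is the comparison between $\int_{\tilde S}\Delta(\tilde E)$ and $\int_S\Delta^M(E)$. Writing $\ch_1(\tilde E)=\pi^*\ch_1^M(E)+\sum a_i E_i$ with $a_i\in\QQ$, the discrepancy is essentially a quadratic form in the $a_i$ built from $(E_i\cdot E_j)$, together with an exceptional correction to $\ch_2$ coming from the Riemann--Roch formula for $\tau_S$. By twisting $\tilde E$ with exceptional divisors one places the $a_i$ in a fundamental domain depending only on the resolution graph, and negative definiteness of $(E_i\cdot E_j)$ bounds the excess by $C_S\cdot\ch_0^M(E)^2$ for a constant $C_S$ depending only on the singularities of $S$ and its birational class.

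The principal obstacle is the fine bookkeeping in this last step: quantifying both the exceptional coefficients $a_i$ and the $\ch_2$-correction in terms of intrinsic singularity data, uniformly across all semistable $E$, and simultaneously controlling the nef (possibly non-big) version of Bogomolov. In the rational Gorenstein case the picture simplifies markedly, since $K_{\tilde S}=\pi^*K_S$ and $R^1\pi_*\cO_{\tilde S}=0$ make the $\ch_2$-correction purely combinatorial; the only remaining excess is the quadratic form $\bigl(\sum a_i E_i\bigr)^2$, and a universal bound on the $a_i$ in terms of the rank then produces a constant $C_S$ independent of the singularity type.
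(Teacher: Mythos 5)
Your plan is the paper's proof in all essentials: resolve $S$, lift $E$ to $F=(\pi^*E)^{**}$ on $\tilde S$, transfer slope semistability to the nef pullback $\pi^*H$, apply a Bogomolov-type inequality on the smooth model, and control the discrepancy between $\int_{\tilde S}\Delta(F)$ and $\int_S\Delta^M(E)$ by a constant times $r^2$. Two places in your sketch need repair, though. First, the ``classical'' Bogomolov inequality on $\tilde S$ is only available in characteristic zero; in positive characteristic it fails without a correction term of the form $c\,r^2$, and the paper invokes Koseki's theorem \cite[Theorem 1.1]{Ko} for nef polarizations --- this is also the source of the dependence of $C_S$ on the birational equivalence class of $S$, which your write-up does not account for. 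Second, the discrepancy is not only the quadratic form $\bigl(\sum a_iE_i\bigr)^2$: writing $c_1(\pi,F)=\sum a_iE_i$ for the exceptional part of $c_1(F)$, the Riemann--Roch computation gives
$$\int_{\tilde S}\Delta^M(F)-\int_S\Delta^M(E)=2r\,h^0(R^1\pi_*F)-2r^2h^0(R^1\pi_*\cO_{\tilde S})+c_1(\pi,F)^2-r\,c_1(\pi,F).K_{\tilde S},$$
so besides the negative-definite form you must bound $h^0(R^1\pi_*F)$ and $c_1(\pi,F).K_{\tilde S}$ uniformly in $r$. Your fundamental-domain normalization of the $a_i$ (note that twisting by exceptional divisors changes neither $\Delta(F)$ nor $(\pi_*F)^{**}$, so it is legitimate) takes care of the intersection-theoretic terms, but the bound on the length of $R^1\pi_*F$ for the normalized lift is the genuine local input; the paper imports it from \cite[Proposition 3.9 and Corollary 3.10]{La-Inters}, and you correctly flag it as the principal obstacle without supplying it.

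One smaller correction: in the rational Gorenstein case no bound on the $a_i$ is needed or used. There $R^1\pi_*\cO_{\tilde S}=0$ forces $R^1\pi_*F=0$, and $K_{\tilde S}=\pi^*K_S$ kills the $K$-term, so the entire discrepancy collapses to $c_1(\pi,F)^2\le 0$, which has the \emph{favorable} sign: one gets $\int_{\tilde S}\Delta^M(F)\le\int_S\Delta^M(E)$ outright, and $C_S$ is just the Bogomolov constant of the smooth model. That sign, not ``a universal bound on the $a_i$ in terms of the rank,'' is why the constant is independent of the singularities. Finally, the reduction from torsion-free to reflexive $E$ (via $\chi(S,E^{**})\ge\chi(S,E)$ and $\ch_i^M(E)=\ch_i^M(E^{**})$ for $i=0,1$) should be stated explicitly, but it is routine.
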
 

The proof of Theorem \ref{main} depends on \cite{La-MZ} and \cite{La-Inters} that allow us to reduce to the smooth case. If $S$ is a smooth projective surface and  the base field has characteristic zero, the above theorem is classical. If $S$ is smooth, projective and  the base field has positive   characteristic, the above theorem was proven by N. Koseki in \cite{Ko} building on the methods of \cite{La-AnnMath}.  In case $X$ is a normal projective surface with only rational double points, Theorem \ref{main} was claimed in \cite{NS}. Unfortunately, the notion of a Chern character in \cite{NS} is not correct and the proofs contain various mistakes.

In the above theorem we use only the fact that we can define the number $\int_S \Delta^M(E)$.  If $E$ is a (non-zero) torsion free coherent $\cO_S$-module of rank  $r>0$ and $D$ is a Weil divisor such that $\det E\simeq \cO_S(D)$ then  this  number can be easily recovered from the Riemann--Roch formula
$$\chi (S, E)=-\frac{1}{2r}\int_S \Delta^M(E) + \frac{1}{2r}D. (D-rK_S)+r \chi (S, \cO_S).$$ 
In Theorem \ref{main} this formula can be taken as a defining property of $\int_S \Delta^M(E)$.

\medskip

Similarly to the smooth projective surface case, the above inequality allows to construct Bridgeland stability conditions on normal proper surfaces:

\begin{Theorem}\label{main2}
Let $S$ be a normal proper surface. Then there exist geometric Bridgeland stability conditions on $D^b(S)$ satisfying the full support property.
\end{Theorem}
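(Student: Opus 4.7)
The plan is to carry out the standard Bridgeland / Arcara--Bertram / Bayer--Macr\`{\i} tilt-stability construction on $D^b(S)$, replacing the usual Chern character throughout by Mumford's $\ch^M$ and using Theorem \ref{main} as the crucial positivity input. The whole strategy will reduce the verification of the Bridgeland axioms and the full support property to Theorem \ref{main}, together with formal algebraic facts that are insensitive to the singularities of $S$.

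Concretely, I would fix a numerically non-trivial nef Weil divisor $H$ on $S$ (which exists on every proper normal surface by pushing forward an ample class from a projective modification furnished by Chow's lemma) and real parameters $t>0$, $\beta\in\RR$, setting $\omega=tH$. Using Mumford's intersection theory, define the $\omega$-slope $\mu_\omega(E)=\omega\cdot\ch_1^M(E)/\ch_0^M(E)$ on torsion-free coherent sheaves; existence of Harder--Narasimhan filtrations for $\mu_\omega$ on $\Coh(S)$ is standard modulo the boundedness furnished by Theorem \ref{main}. Form the torsion pair $(\cT_\beta,\cF_\beta)$ in $\Coh(S)$ cut out by $\mu_\omega=\beta H^2$, and tilt to the heart $\cA^{H,\beta}=\langle\cF_\beta[1],\cT_\beta\rangle$ of a bounded t-structure on $D^b(S)$. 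Take the central charge
$$Z_{\omega,\beta}(E)=-\int_S\ch_2^M(E)+\beta\,H\cdot\ch_1^M(E)-\tfrac{\beta^2-t^2}{2}H^2\ch_0^M(E)+i\,\omega\cdot\bigl(\ch_1^M(E)-\beta H\,\ch_0^M(E)\bigr).$$
By construction $\Im Z_{\omega,\beta}\ge 0$ on $\cA^{H,\beta}$; the implication $\Im Z_{\omega,\beta}(E)=0\Rightarrow\Re Z_{\omega,\beta}(E)<0$ follows by applying Theorem \ref{main} to the $\mu_\omega$-semistable HN factors of $E$, in the same Hodge-index fashion as in the smooth case. Existence of HN filtrations for $(\cA^{H,\beta},Z_{\omega,\beta})$ is then produced by the Bridgeland / Piyaratne--Toda criterion.

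For the full support property I would use the Bayer--Macr\`{\i} principle on the lattice generated by $(\ch_0^M,H\cdot\ch_1^M,\int_S\ch_2^M)$, taking the quadratic form
$$Q(E)=\int_S\Delta^M(E)+C_S\,\ch_0^M(E)^2+\lambda\,(\Im Z_{\omega,\beta}(E))^2$$
for a small $\lambda\ge 0$ tuned so that $Q$ has signature $(1,-,-)$ and is negative definite on $\ker Z_{\omega,\beta}$. Theorem \ref{main} gives $Q\ge 0$ on every $\mu_\omega$-semistable torsion-free sheaf, and the standard seesaw argument along the HN filtration in $\cA^{H,\beta}$ extends this to every $\sigma_{\omega,\beta}$-semistable object, which is precisely the support property. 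Geometricity is immediate: every closed-point sheaf $\cO_x$ satisfies $\ch_0^M(\cO_x)=0$, $\ch_1^M(\cO_x)=0$, and $\int_S\ch_2^M(\cO_x)=1$, hence lies in $\cT_\beta\subset\cA^{H,\beta}$ with $Z_{\omega,\beta}(\cO_x)=-1$ and is $\sigma_{\omega,\beta}$-stable of phase $1$.

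The main obstacle I expect is the genuinely non-projective case: a priori there need not exist a nef Weil divisor $H$ on $S$ with $H^2>0$, and without this positivity the key step $\Im Z=0\Rightarrow\Re Z<0$ collapses for the naive central charge above. My plan to circumvent this is to pass through a resolution $\pi\colon\tilde S\to S$, pick $\tilde\omega$ ample on $\tilde S$, and transport the required positivity back to $S$ via the compatibility of $\ch^M$ with the ordinary Chern character on $\tilde S$ that the author establishes in the earlier sections; this compatibility expresses $\int_S\ch^M$ through intersections on $\tilde S$, from which the positivity of $\omega^2$ can be read off after pushing forward the square of $\tilde\omega$. A secondary but delicate point is the uniform linear-algebra bookkeeping of the constant $C_S$ from Theorem \ref{main} in assembling $Q$, so that the support property holds not just at a single $(t,\beta)$ but in an open chamber; this is routine once the Bogomolov inequality is in hand but requires care in matching coefficients.
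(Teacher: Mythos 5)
Your proposal follows essentially the same route as the paper: push forward an ample divisor from a projective resolution to obtain a numerically ample Weil divisor $H$ (this is exactly Lemma \ref{existence-num-ample}, and it already disposes of the non-projective worry you raise at the end, since $H=f_*A$ is effective and hence $H^2=A.f^*H>0$), tilt $\Coh(S)$ at the slope determined by $H$, and feed Theorem \ref{main} into the Arcara--Bertram/Macr\`{\i}--Schmidt machinery; the paper likewise defers the Harder--Narasimhan and support-property verifications to \cite{MS}. Two points in your write-up, however, need repair before the argument closes.

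First, with the central charge you wrote down, the implication ``$\Im Z_{\omega,\beta}(E)=0$ implies $\Re Z_{\omega,\beta}(E)$ is nonzero of the correct sign'' does \emph{not} follow from Theorem \ref{main} for all $t>0$. The Hodge-index computation gives, for a positive-rank semistable $E$ with $H.\ch_1^B(E)=0$, a bound of the form
$$\pm\Re Z_{\omega,\beta}(E)\ \ge\ \frac{1}{2\ch_0^M(E)}\int_S\Delta^M(E)+\frac{\ch_0^M(E)}{2}\,t^2H^2,$$
and Theorem \ref{main} only bounds the first summand below by $-\frac{C_S}{2}\ch_0^M(E)$, so you get positivity only when $t^2H^2>C_S$. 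The paper avoids any restriction on the parameters by building the correction term $\frac{C_S}{2}\ch_0^M(E)$ into the charge $Z_{H,B}$ itself; alternatively you must restrict to $t^2H^2>C_S$, which suffices for an existence statement but should be said. Second, for the \emph{full} support property the lattice cannot be the rank-three one generated by $(\ch_0^M,H.\ch_1^M,\int_S\ch_2^M)$: your own quadratic form contains $\int_S\Delta^M$, hence $(\ch_1^M)^2$, which does not factor through $H.\ch_1^M$. One must take the image of $K_0(S)$ in $\ZZ\oplus N(S)\oplus\ZZ$, and the finite generation of $N(S)$ is not automatic on a possibly non-projective normal surface --- the paper invokes Kahn's generalized theorem of the base \cite{Kah} for this. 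With these adjustments (and the Hodge index theorem on $N(S)_\RR$, which the paper notes holds on normal proper, possibly non-projective, surfaces) your argument goes through and coincides with the paper's.
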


In the above theorem $D^b(S)$ denotes the bounded derived category of coherent sheaves on $S$.
This is well-known to be equivalent to the bounded derived category of complexes with coherent cohomology. Theorem \ref{main2} follows immediately from  Lemma \ref{existence-num-ample} and Theorem \ref{explicit-stability-on-proper}.
 
 \medskip
 
It is well-known that every smooth proper surface is projective. In fact, a normal surface with only $\QQ$-factorial singularities is already projective. By Lipman's result (see \cite[Proposition 17.1]{Li}) a rational surface singularity is $\QQ$-factorial. So Theorem \ref{main2} for normal surfaces with at most rational singularities is still too weak to provide 
Bridgeland stability conditions on non-projective varieties.
 However, there exist many normal proper surfaces that are not projective, or even such that do not contain any non-trivial line bundles (see \cite{Sch}). Using such surfaces and Theorem \ref{main2}, we get the first examples of Bridgeland stability conditions on some proper, non-projective scheme.

\medskip

Note that Theorems \ref{main} and \ref{main2} are new also in the characteristic zero case.

\medskip

The structure of the paper is as follows. In Section 1 we gather a few preliminary results. Section 2 contains a study of  Mumford's Chern character. In Section 3 we prove Theorem \ref{main}.
The last section contains proof of Theorem \ref{main2}.

\medskip

\subsection*{Notation.}

In the paper we fix a base field $k$ (which unless otherwise stated need not be algebraically closed nor perfect). A variety  is an  irreducible, reduced scheme $X$ over $k$ such that the structure morphism $X\to \Spec k$ is separated and of finite type. It is called  proper if  $X\to \Spec k$ is proper.
A vector bundle on a variety $X$ is a locally free coherent $\cO_X$-module.
A surface is a $2$-dimensional variety. 

\section{Preliminaries}

\subsection{Intersection theory on normal surfaces}

Let $S$ be a normal surface. Let $f: \tilde S \to S$ be any resolution of singularities, i.e., a proper birational morphism from a regular surface $\tilde S$. Then one can  define the Mumford pullback of Weil divisors $f^*: Z_1(S)\to Z_1(\tilde S)_{\QQ}$ (see \cite[Example 7.1.16]{Fu}). This is defined by the property that  for every Weil divisor $D$ on $S$  the difference of $f^*D$ and the proper transform of $D$ is supported on the exceptional locus 
and for any irreducible comonent $C$ of the exceptional locus of $f$ the intersection number
$f^*D.C$ vanishes.

Note that there exists a positive integer $N$ such that the image of $f^*$ is contained in $\frac{1}{N}Z_1(\tilde S)\subset Z_1(\tilde S)_{\QQ}$. We can take as $N$, e.g., the product over all $x\in S$ of the determinants of intersection matrices of irreducible components of $f^{-1}(x)$.

The above pullback descends to the pullback homomorphism  $f^*: A_1(S)\to A_1(\tilde S)_{\QQ}$.
This allows us to define Mumford's intersection product
$$A_1(S)\otimes A_1(S)\to A_0(S)_{\QQ}$$
by the formula 
$$\alpha\cdot \beta = f_*(f^*\alpha \cdot f^*\beta)$$
(see \cite[Example 8.3.11]{Fu}). Here $f^*\alpha \cdot f^*\beta$ denotes the intersection product of the class of the Cartier $\QQ$-divisor $f^*\alpha$ with the class $f^*\beta$ of a $1$-cycle
(see \cite[2.3]{Fu}). 
This product is symmetric (by  \cite[Theorem 2.4]{Fu}), bilinear and independent of the choice of resolution $f$. By definition, the image of Mumford's intersection product is contained in 
$\frac{1}{N} A_0(S)\subset A_0(S)_{\QQ}$ for $N$ as above.

Note that since $\tilde S$ is only regular, we cannot use the intersection product on non-singular varieties as defined in \cite[Chapter 8]{Fu}. However, instead of the above intersection product on $\tilde S$ one can equivalently  use the rational intersection product provided by \cite[Proposition 3.9]{Kl} (see also \cite[Proposition 2.3]{Vi}) or the intersection product provided by Quillen's higher K-theory by using \cite[20.5, Theorem]{Fu}.

In the following we also write $\alpha.\beta $ to denote $ \int_S \alpha\cdot \beta$. 
By abuse of notation we also write $\alpha ^2$ to denote $\alpha.\alpha$.
By construction the image of the corresponding intersection product
$$A_1(S)\otimes A_1(S)\to {\QQ}, \quad (\alpha, \beta)\to \alpha.\beta$$ 
is contained in the discrete subset $\frac{1}{N}\ZZ\subset \QQ$.

\subsection{Ampleness and nefness on normal proper surfaces}

Let $S$ be a normal proper surface over an algebraically closed field $k$. We say that an $\RR$-Weil divisor $H$ on $S$ is nef if for every closed curve $C\subset S$ we have $H.C\ge 0$. We say that an  $\RR$-Weil divisor $H$ is \emph{numerically ample} if for every closed curve $C\subset S$ we have $H.C> 0$ and $H^2>0$. 

The following lemma, showing that such divisors always exist, seems to be well-known but we recall its proof for convenience of the reader. 

\begin{Lemma}\label{existence-num-ample}
	Every proper normal surface $S$ admits a numerically ample Weil divisor.
\end{Lemma}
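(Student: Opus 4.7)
The plan is to produce a numerically ample divisor on $S$ by pushing forward an ample divisor from a resolution. More precisely, since $\tilde S$ is a smooth proper surface, it is projective, so we may pick a very ample Cartier divisor $\tilde H$ on $\tilde S$. I would then set $H:=f_*\tilde H$, which is a well-defined (and, for $\tilde H$ effective, effective) Weil divisor on $S$. The claim is that this $H$ is numerically ample, and the verification reduces, via the identity $\int_S\alpha\cdot\beta=\int_{\tilde S}f^*\alpha\cdot f^*\beta$ recalled above, to intersection computations on $\tilde S$.

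For the two conditions I would first analyze the Mumford pullback $f^*H$. By construction $f^*H$ and $\tilde H$ have the same pushforward, so $f^*H=\tilde H+D$ for some $\QQ$-divisor $D$ supported on the exceptional locus. The defining property $f^*H\cdot E_i=0$ for every exceptional component $E_i$ forces $D\cdot E_i=-\tilde H\cdot E_i\le 0$. Since the intersection form on the exceptional components is negative definite and $-M^{-1}$ has non-negative entries for a resolution of a normal surface singularity (a standard fact going back to Mumford/Artin), $D$ is effective.

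With this in hand, for any closed curve $C\subset S$ the Mumford pullback decomposes as $f^*C=\tilde C+\sum a_iE_i$ with $a_i\ge 0$, where $\tilde C$ is the proper transform of $C$; then
\[
\int_S H\cdot C=\int_{\tilde S}f^*H\cdot f^*C=\int_{\tilde S}\tilde H\cdot f^*C+\int_{\tilde S}D\cdot f^*C=\tilde H\cdot\tilde C+\sum a_i\,\tilde H\cdot E_i>0,
\]
since $D\cdot f^*C=0$ (as $f^*C$ is Mumford-orthogonal to each $E_i$) and $\tilde H$ is ample. Similarly,
\[
\int_S H^2=\int_{\tilde S}(f^*H)^2=\int_{\tilde S}f^*H\cdot\tilde H=\tilde H^2+D\cdot\tilde H\ge\tilde H^2>0,
\]
using $f^*H\cdot D=0$ once again and the fact that $D$ is effective while $\tilde H$ is ample. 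This yields both defining inequalities for numerical ampleness.

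The main obstacle I anticipate is the effectivity of the correction $D$, i.e.\ the non-negativity of the exceptional coefficients in $f^*H$; once this standard consequence of negative definiteness of the exceptional intersection matrix is in place, everything else is a direct computation with the projection formula for Mumford's intersection product recorded in the preliminary subsection.
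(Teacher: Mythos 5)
Your proof is correct and follows essentially the same route as the paper: push forward an ample divisor from a projective resolution and use the negative definiteness of the exceptional intersection matrix to get effectivity of the exceptional correction terms. The only cosmetic difference is that the paper applies the effectivity argument to $f^*C$ and deduces $H^2>0$ from the effectivity of $H$ itself, whereas you decompose $f^*H=\tilde H+D$ and run both computations through that decomposition.
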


\begin{proof}
	Let $f:\tilde S\to S$ be a resolution of singularities with projective $\tilde S$ (to obtain such a resolution one can use Chow's lemma and then a resolution of singularities).
Let $A$ be an effective ample Cartier divisor on $\tilde S$. If $C$ is an effective Weil divisor on $S$ then
	$f^*C$ is also effective. Indeed, if we write 
	$f^*C=f_*^{-1}C+\sum a_i E_i$, where $a_i\in \QQ$ and  $E_i$ are the irreducible components of the exceptional locus of $f$, then for all $j$
	$$(\sum a_iE_i).E_j=-f_*^{-1}C.E_j\le 0,$$
	which implies that $a_i\ge 0$ (see, e.g., \cite[1.1]{Gi}).
	This shows that $H=f_*A$ satisfies inequality
	$$H.C=A.f^*C>0.$$
Since by construction $H$ is effective, this also shows that $H^2>0$.
\end{proof}

\medskip

Let $N(S)$ denote the group of Weil divisors on $S$ modulo the radical of Mumford's intersection pairing. One can check that $N(S)$ is isomorphic to the quotient of the group $B_1(S)$ of Weil divisors modulo algebraic equivalence by torsion. If $H$ is a numerically ample $\RR$-Weil divisor on $S$ then passing to a resolution of singularities one can easily see that Mumford's intersection form is negative definite on $H^{\perp}\subset N(S)_{\RR}$, i.e., Hodge's index theorem holds on normal proper (possibly non-projective!) surfaces.
If $H$ is an $\RR$-Cartier divisor then by the Nakai--Moishezon criterion $H$ is numerically ample if and only if it is ample (see \cite[Theorem 1.3]{FM}).  In particular, a numerically ample 
$\RR$-Weil divisor on a non-projective surface is not numerically equivalent to a (numerically) ample $\RR$-Cartier divisor. Note also that by \cite{Sch} there exist normal proper surfaces $S$ with the trivial intersection form on $\Pic S$ (or even with trivial $\Pic S$, i.e., with no non-trivial line bundles).

\subsection{The zeroth and first Chern characters on normal varieties}\label{0-1-Chern-character}

Let $X$ be a normal variety of dimension $n$. The aim of this subsection is to define the Chern characters
$$\ch_i^M: K_0(X)\to A_{n-i}(X)_{\QQ}$$
for $i=0, 1$.

Embedding  $j: X_{reg }\hookrightarrow X$ of the regular locus  induces the restriction  homomorphism $j^*: A_*(X)\to A_*(X_{reg})$.
By normality of $X$ the closed subset $Y=X\backslash X_{reg}\subset X$ has codimension $\ge 2$ and hence the exact sequence
$$A_*(Y)\to A_*(X)\to A_*(X_{reg})\to 0$$ 
shows that $j^*$ is an isomorphism on the Chow groups  of $(n-1)$ and $n$-cycles. So for any class $\alpha$  in the Grothendieck group $  K_0(X)$ of coherent sheaves on $X$, we can define $\ch_0^M(\alpha)$ and $\ch_1^M(\alpha)$ by setting 
$$\ch_i^M(\alpha):=(j^*)^{-1} (\ch_i (j^*\alpha)\cap [X_{reg}])$$
for $i=0,1$. For any 
class $\alpha \in K_0(X)$ we consider $\ch_0^M(\alpha)$ as an integer (using the canonical isomorphism  $ A_n(X)\to\ZZ$). If $\alpha$ is the class of a coherent $\cO_S$-module $E$ then $\ch _0^M (E)$ is equal to the the rank of $E$ at the generic point of $X$.
If $E$ is a reflexive  coherent $\cO_X$-module of rank $1$ then there exists a Weil divisor $D$ with $E\simeq \cO_X(D)$. Then $\ch_1^M(E)=c_1^M(E)$ is the class of $D$ in $A_{n-1}(X)$.
If $E$ is a torsion free coherent $\cO_X$-module of rank $r$ then $\det E:= (\bigwedge ^r E)^{**}$ is a reflexive sheaf of rank $1$ and $c_1^M(E)=c_1^M(\det E)$. In particular, the anticanonical divisor $-K_X=c_1^M(T_X)$ is the unique class of Weil divisors extending $-K_{X_{reg}}$ (note that unlike the cotangent sheaf $\Omega_X$, the tangent sheaf $T_X$ is torsion free and even reflexive).

 \section{Chern character on normal surfaces}
 
Let $S$ be a normal surface. 
By Subsection \ref{0-1-Chern-character} to define a homomorphism
$$ \ch^M=(\ch_0^M, \ch_1^M, \ch_2^M): K_0(S)\to A_*(S)_{\QQ}$$
we need only to define $\ch_2^M:K_0(S)\to A_2(S)_{\QQ}$. For $\alpha\in K_0(S)$ we define it by the formula
$$\ch_2^M(\alpha):=
\tau _S(\alpha)-[S]- \left( \ch_1^M(\alpha ) -\frac{1}{2}K_{S} \right) +
\frac{1}{2}\ch_1^M(\alpha)\cdot   K_{S}-\ch _0^M (\alpha)\, \Td _2 ({S}).$$
Let us recall that by definition $\Td (S)=\tau _S  (\cO_S)$. If $S$ is smooth then $\Td (S)=\td (S)\cap [S]$.

The homomorphism $ \ch^M: K_0(S)\to A_*(S)_{\QQ}$ is called \emph{Mumford's Chern character}.
Let $K^0(S)$ be the Grothendieck group of vector bundles on $S$.
Composing $\ch^M$ with the canonical homomorphism $\varphi: K^0(S)\to K_0(S)$ one immediately sees that for any $\alpha \in K^0(S)$ 
$$\ch^M(\varphi (\alpha))= \ch(\alpha)\cap [S],$$
where $\ch: K^0(S)\to A^*(S)_{\QQ}$ is the usual Chern character of vector bundles. In particular, if $S$ is smooth then $\varphi$ is an isomorphism and $\ch^M (\alpha)= \ch(\alpha)\cap [S]$ for all $\alpha \in K_0(S)$. 

\medskip

By the covariance property for proper morphisms (see \cite[Theorem 18.3, (1)]{Fu}), if $S$ is a proper normal surface then  $\chi(S, E)=\int_S\tau _S(E).$ In this case the above definition implies that for any coherent $\cO_S$-module $E$ we have the Riemann--Roch formula
	\begin{equation}
		\chi (S, E)=\int_S \ch_2^M(E)-\frac{1}{2}\ch_1^M(E).K_S+\ch_0^M(E)\, \chi (S, \cO_S) .\label{RR}
	\end{equation}

\subsection{Computation of $\ch^M$}\label{computation-ch^M}
 
In this subsection we show how to compute $\ch^M$ using a resolution of singularities.
Let $\Refl (S)$ denotes the category of reflexive coherent $\cO_S$-modules.
The following lemma says that it is sufficient to compute $\ch^M$ on $\Refl (S)$:

\begin{Lemma}\label{reduction-to-reflexive}
	Let $S$ be a normal surface. Then the canonical homomorphism $K(\Refl (S))\to K_0(S)$, from the Grothendieck group of $\Refl (S)$ to $K_0(S)$, is  
	an isomorphism.
\end{Lemma}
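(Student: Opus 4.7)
The plan is to prove both surjectivity and injectivity, treating surjectivity as the substantial direction. For surjectivity, I would use dévissage. Given a coherent sheaf $\cF$ on $S$, the torsion exact sequence $0 \to T(\cF) \to \cF \to \cF/T(\cF) \to 0$ reduces the problem to torsion-free and torsion cases. A torsion-free $F$ embeds in its reflexive hull as $0 \to F \to F^{**} \to Q \to 0$ with $Q$ of zero-dimensional support (since $F = F^{**}$ outside codimension two on a normal surface). A torsion sheaf filters by support dimension; the pure one-dimensional part further filters, along the irreducible components of the support, into structure sheaves $\cO_C$ of Weil divisors $C$, and each $\cO_C$ differs from the reflexively-presented quotient $\cO_S/\cO_S(-C)$ only by a zero-dimensional sheaf. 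Thus, modulo classes already in $\im(K(\Refl(S)))$, surjectivity reduces to proving $[k(x)] \in \im(K(\Refl(S)))$ for every closed point $x \in S$.

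For the point case I would use a Koszul-type presentation by reflexive rank-one sheaves. Choose Weil divisors $D_1, D_2$ through $x$ sharing no component such that the associated divisorial ideals in $\cO_{S,x}$ sum to the maximal ideal $\mathfrak{m}_x$. The sequence
\begin{equation*}
0 \to \cO_S(-D_1-D_2) \to \cO_S(-D_1) \oplus \cO_S(-D_2) \to \cO_S \to C \to 0,
\end{equation*}
with maps $a \mapsto (a,a)$ and $(b,c) \mapsto b-c$, is exact, using that $\cO_S(-D_1) \cap \cO_S(-D_2) = \cO_S(-D_1-D_2)$ when $D_1, D_2$ share no component. The cokernel $C$ contains $k(x)$ with multiplicity one, plus zero-dimensional contributions at the other points of $D_1 \cap D_2$, which are handled by a straightforward induction on a finite set of points. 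At a smooth $x$ we take $D_1, D_2$ to be the closures of the divisors of two local parameters; at a singular $x$ we use non-Cartier Weil divisors --- e.g., prime Weil divisors through $x$, as in the $A_n$-singularity case where $\mathfrak{m}_x = (x,z) + (y,z)$. When Weil divisors alone cannot realize such a sum, a pushforward argument via a resolution $\pi: \tilde S \to S$ is used: write $[k(x)] = \pi_*[k(\tilde x)]$ for any $\tilde x \in \pi^{-1}(x)$, apply a Koszul resolution on the smooth $\tilde S$, and correct pushforwards to reflexives via double dualization.

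For injectivity, I would verify that $\Refl(S)$ is closed inside $\Coh(S)$ under both extensions and kernels of admissible epimorphisms. The local cohomology long exact sequence at each closed point $x \in S$ shows that if $A$ and $B$ are reflexive (i.e., torsion-free and $S_2$) and $0 \to K \to A \to B \to 0$ is exact, then $H^i_{\mathfrak{m}_x}(K) = 0$ for $i = 0, 1$, so $K$ has depth $\geq 2$, hence is reflexive. Combined with the finite reflexive presentations produced above, Quillen's resolution theorem then yields the isomorphism $K(\Refl(S)) \cong K_0(S)$. The main obstacle is the final step of the surjectivity argument --- producing $[k(x)]$ as an explicit combination of reflexive classes at singular points --- since for $\QQ$-factorial (in particular rational) singularities the class group of height-one primes through $x$ may be too small to sum directly to $\mathfrak{m}_x$, forcing the more delicate pushforward construction through the resolution of singularities.
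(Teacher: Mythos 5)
Your architecture --- d\'evissage for surjectivity, then closure properties plus Quillen's resolution theorem --- breaks down in two places, both exactly where the argument meets the singular locus. First, the class $[k(x)]$ at a singular point is never actually isolated. Your Koszul sequence is exact and shows that $[\cO_S]-[\cO_S(-D_1)]-[\cO_S(-D_2)]+[\cO_S(-D_1-D_2)]$ is the class of a zero-dimensional sheaf supported on all of $D_1\cap D_2$, i.e.\ equals $m_x[k(x)]+\sum_{y\ne x}m_y[k(y)]$; but there is no well-founded ``induction on a finite set of points,'' since treating each auxiliary point $y$ by the same construction drags in yet another finite set of points (there is no Bertini/moving lemma on a possibly non-projective normal surface to force $D_1\cap D_2=\{x\}$), and nothing decreases. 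The fallback through a resolution $\pi\colon\tilde S\to S$ is worse: the correction terms $R^1\pi_*L$ and $(\pi_*L)^{**}/\pi_*L$ are zero-dimensional sheaves supported precisely on $\Sing S$, so all you obtain is a relation $d\,[k(x)]+\sum_{y\in\Sing S}n_y[k(y)]\in\im\bigl(K(\Refl(S))\to K_0(S)\bigr)$ --- an integral linear combination of exactly the unknown classes, with no argument that the system can be solved for $[k(x)]$. Second, even granting surjectivity, the appeal to the resolution theorem is unfounded: that theorem requires every coherent sheaf to admit a finite resolution by objects of $\Refl(S)$, whereas your d\'evissage only produces identities of classes in $K_0(S)$ (for a skyscraper at a singular point you produce no resolution at all, only a class relation). ``The class lies in the image'' and ``the object has a finite $\Refl(S)$-resolution'' are not interchangeable, and injectivity is precisely where the difference matters.

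For contrast, the paper disposes of both issues with a single input: by Schr\"oer--Vezzosi every normal surface (projective or not) has enough vector bundles, so any coherent $F$ admits a presentation $E_1\xrightarrow{\ \varphi\ }E_0\to F\to 0$ with $E_i$ locally free; since $\im\varphi\subset E_0$ is torsion free, $\ker\varphi$ is reflexive, and $0\to\ker\varphi\to E_1\to E_0\to F\to 0$ is a finite reflexive resolution. This verifies the hypothesis of the resolution-theorem argument in one stroke and makes the pointwise d\'evissage unnecessary. Your local-cohomology verification that $\Refl(S)$ is closed under extensions and under kernels of surjections between reflexive sheaves is correct and is the part of your argument that survives; what is missing is the existence of global resolutions, which on these surfaces is a genuinely nontrivial theorem and cannot be replaced by hand-built complexes of rank-one reflexive sheaves without confronting the difficulties above.
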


\begin{proof}
	The proof is analogous to the one showing that on a smooth variety $X$ the canonical homomorphism
	$K^0(X)\to K_0(X)$ is an isomorphism. So it is sufficient to show that
	any coherent $\cO_S$-module $F$ has a finite resolution 
	$$0\to E_2\to E_1\to E_0\to F\to 0$$
	in which all $E_i$ are reflexive coherent $\cO_S$-modules. By \cite[Theorem 2.1]{SV} $S$ has sufficiently many vector bundles and hence there exists an exact sequence of the form
	$$ E_1\mathop{\longrightarrow}^{\varphi} E_0\to F\to 0$$
	in which both $E_1$ and $E_0$ are vector bundles (in particular, they are reflexive). Then
	$\im \varphi$ is torsion free (as it is contained in $E_0$) and hence $\ker \varphi$ is reflexive by \cite[Lemma 31.12.7]{SP}.
\end{proof}

\medskip
 
 Let $f: \tilde S\to S$ be any resolution of singularities of $S$ (existence of such resolutions has been established by J. Lipman; see \cite[Theorem 54.14.5]{SP}).
 By \cite[Theorem 18.3, (1)]{Fu} for any coherent $\cO _{\tilde S}$-module $F$ we have
 $$\tau _S (Rf_*F)= f_* (\tau_{\tilde S} (F)).$$
 In particular, if $F$ is a vector bundle on $\tilde S$ and $E =(f_{* }F)^{**}$ then we have
 $$\tau_{S}( E)=f_* (\tau_{\tilde S} (F))+\tau_{S}(R^1f_*F)+ \tau _S (E/f_*F).$$
 For a sheaf $G$ on $S$ supported in dimension $0$ and a closed point $x\in X$
 we denote by $l_x(G)$ the length of the stalk $G_x$ over the local ring $\cO_{X,x}$. 
 Since both $R^1f_*F$ and $E/f_*F$ are supported on a finite set of points (the points over which $f$ is not an isomorphism), \cite[Example 18.3.11]{Fu} implies that
 $$\tau_{S}(R^1f_*F)=\sum _{x\in S} l_x(R^1f_*F)\, [x]$$
 and 
 $$\tau_{S}(E/f_*F)=\sum _{x\in S} l_x(E/f_*F)\, [x].$$
 Therefore 
 $$\tau_{S}( E)=f_* (\tau_{\tilde S} (F))+\sum _{x\in S} \chi (x, F)\, [x],$$
 where $\chi (x, F):=l_x(E/f_*F)+ l_x(R^1f_*F).$
 In particular, 
$$\Td (S)=\tau _S  (\cO_S)= f_* (\Td({\tilde S}))+\sum _{x\in S} \chi (x, \cO_{\tilde S})\, [x]$$
recovering the formula from  \cite[Example 18.3.4]{Fu}.

For every point $x\in S$ there exists a unique Weil $\QQ$-divisor $c_1(x, F)$ supported on the set $f^{-1}(x)$ such that for every irreducible component $C$ of $f^{-1}(x)$ we have
$$c_1(x, F).C=\int_C c_1(F)\cap [C].$$
 Note that $f_*(\ch_1^M(F))=\ch_1 ^M(E)$ and
 $$\ch_1^M(F)=f^*\ch_1^M(E) + c_1(f, F).$$
where $c_1(f, F):= \sum _{x\in S} c_1(x, F)$.
 By definition we have
 $$\ch (F)\cap \td ({\tilde S})= 1+\left( c_1(F) +\frac{1}{2}c_1({\tilde S}) \right) +
 \ch_2(F)+\frac{1}{2}\ch_1(F)\cap  c_1({\tilde{S}})+\ch _0(F)\cap\td _2 (\tilde{S}).$$  
 Writing $K_f= -c_1(f, T_{\tilde S})$, we have
$$\ch_1(F)\cap [K_{\tilde{S}}] = (f^*\ch_1^M(E) +c_1(f, F))\cdot (f^*K_S+K_f)=f^* (\ch_1^M(E)\cdot K_S)+c_1(f, F)\cdot K_f.$$ 
 It follows that
 \begin{align*}
 	f_* (\tau_{\tilde S} (F))= f_* (\ch (F)\cap \Td ({\tilde S}) )=
 [S]+ \left( c_1^M(E) -\frac{1}{2}K_{S} \right) +f_*(\ch_2(F)\cap [{\tilde S}])\\
 -\frac{1}{2}\left( \ch_1^M(E)\cdot K_S+f_*(c_1(f, F)\cdot K_f)
 \right)
 +\ch ^M_0(E)\, \left(\Td_2 (S)-\sum _{x\in S} \chi (x,\cO_{\tilde S})\, [x]\right).
 \end{align*}
 Using the definition of $\ch_{2}^M$ and the above formula, we get
 \begin{align*}
 	\ch_2^M (E)&=\left(f_* (\tau_{\tilde S} (F))+\sum _{x\in S} \chi (x, F)\, [x])\right)-[S]- \left( c_1^M(E) -\frac{1}{2}K_{S} \right) +
 	\frac{1}{2}\ch_1^M(E)\cdot   K_{S}-\ch _0^M (E)\, \Td _2 ({S})\\ 
 &=	f_*(\ch_2(F)\cap [{\tilde S}])+\sum _{x\in S}\left(\chi (x, F)-\chi (x, \cO_{\tilde S})\, \ch_0 (F)
 	-\frac{1}{2}c_1(x, F)\cdot K_{\tilde S}\right)[x].
 \end{align*} 
 In particular, the above formula for $\ch_2^M (E)$ does not depend on the choice of $F$ such that $E=(f_*F)^{**}$.

To finish the computation it is sufficient to note that for every $E\in \Refl (S)$ there exists 
a vector bundle $F$ on $\tilde S$ such that  $E=(f_*F)^{**}$ (e.g., we can take $F=(f^*E)^{**}$).

\medskip

\begin{Remark}
Note that 
$$f_*(\ch_1^M(F))^2=(\ch_1^M(E))^2 + \sum _x c_1(x, F)^2[x].$$
So if we set $c_2^M(E):=\frac{1}{2}(c_1^M(E))^2-\ch_2^M (E)$ then 
\begin{align*}
c_2^M(E)=f_*(c_2(F)\cap [{\tilde S}])-\sum _{x\in S}
\left(\chi (x, F)-\chi (x, \cO_{\tilde S})\, \ch_0 (F)
+\frac{1}{2}c_1(x, F)\cdot (c_1(x, F)-K_{\tilde S}) \right)
[x].
\end{align*} 
\end{Remark}

\begin{Remark}
 	In  \cite{La-MZ} and \cite{La-Inters} the author introduced and studied a completely different notion of the Chern character on normal surfaces, that is defined only for reflexive sheaves on normal surfaces over an algebraically closed field. The relation between this character and the one considered here is provided by the formula
 		$$c_2^M(E)= c_2(E)-\sum _{x\in \Sing S}a(x, E)[x]$$
 	that holds in $A_0(S)_{\RR}$ (for proper normal surfaces this follows from \cite[Theorem 4.4]{La-Inters} and a proof in the non-proper case is similar). 
\end{Remark}

 \subsection{Operational Chern character on normal surfaces}

 It is well-known that if a normal surface $S$ over a perfect field has a resolution of singularities, whose exceptional divisor consists of only rational curves, then $S$ is an Alexander scheme, i.e.,  for every $f: Y\to S$ the evaluation map $ev_X: A^*(Y\to S)\to A_*(Y)$, $\alpha\to \alpha \cap [S]$, is an isomorphism (see \cite[Theorem 4.1]{Vi}). However, this is no longer true for arbitrary normal surfaces (see \cite[Example 4.8]{Ki}). More precisely, there exist normal proper surfaces for which the evaluation map $A^*(S)\to A_*(S)$ is not an isomorphism.
 
 Let $B_* (S)$ be the quotient of $A_*(S)$ by algebraic equivalence 
 and let $B^*(S)=B^*(S\to S)$ be the corresponding operational Chow cohomology. Then our construction of Mumford's Chern character together with \cite[Theorem 4.1]{Vi} and \cite[Remark 4.5]{Ki}, imply the following corollary:

 \begin{Corollary}
 	Let $S$ be a normal surface over a perfect field. Then there exists a well-defined homomorphism  $ \ch^M: K_0(S)\to B^*(S)_{\QQ}$
 	such that for any $\alpha \in K_0(S)$ we have
 	$$\tau _S (\alpha)= \ch^M(\alpha)\cap \Td (S)$$
 	in $B_*(S)_{\QQ}$.
 	Moreover, if $S$ admits a resolution of singularities  such that the irreducible components of the exceptional divisor are rational curves then we can lift the above homomorphism to a homomorphism
 	$ \ch^M: K_0(S)\to A^*(S)_{\QQ}$ such that 
 	$$\tau _S (\alpha)= \ch^M(\alpha)\cap \Td (S) $$
 	in $A_*(S)_{\QQ}$.
 \end{Corollary}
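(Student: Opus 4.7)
The plan is to use the evaluation isomorphisms provided by \cite{Vi} and \cite{Ki} to lift the Mumford Chern character $\ch^M:K_0(S)\to A_*(S)_\QQ$ constructed in Section 2 to a map taking values in the corresponding operational ring.

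For the first assertion, I would apply \cite[Remark 4.5]{Ki}, which asserts that for any normal surface $S$ over a perfect field the evaluation map $ev_S: B^*(S)_\QQ \to B_*(S)_\QQ$, $\xi \mapsto \xi \cap [S]$, is an isomorphism. Composing the homomorphism $\ch^M: K_0(S) \to A_*(S)_\QQ$ from Section 2 with the projection $A_*(S)_\QQ \to B_*(S)_\QQ$ and then applying $ev_S^{-1}$ componentwise produces the desired homomorphism $K_0(S)\to B^*(S)_\QQ$, which I again denote by $\ch^M$. For the moreover assertion, the hypothesis on the resolution guarantees by \cite[Theorem 4.1]{Vi} that $S$ is an Alexander scheme, so $ev_S: A^*(S)_\QQ \to A_*(S)_\QQ$ is already an isomorphism and the same lifting produces $\ch^M:K_0(S)\to A^*(S)_\QQ$ without first passing to $B_*(S)_\QQ$.

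It then remains to verify the Riemann--Roch identity $\tau_S(\alpha)=\ch^M(\alpha)\cap \Td(S)$ in the appropriate group. Decomposing the lifted character into its codimension components (still denoted $\ch_i^M(\alpha)$, now viewed as operational classes) and expanding $\Td(S)=[S]-\tfrac12 K_S+\Td_2(S)$, the dimension-two and dimension-one components of $\ch^M(\alpha)\cap \Td(S)$ match the corresponding pieces of the defining formula of $\ch_2^M$ from Section 2, simply because $\ch_i^M(\alpha)\cap [S]$ returns, by construction, the Mumford character component. In dimension zero the identity reduces to the assertion that the operational cap product $\ch_1^M(\alpha)\cap K_S$ agrees with Mumford's intersection product $\ch_1^M(\alpha)\cdot K_S$ that was used in the defining formula of $\ch_2^M$.

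This final compatibility is where the cited theorems really do their work: once $ev_S$ is known to be an isomorphism, the induced pairing of a codimension-one operational class with the class of a Weil divisor is forced by symmetry, bilinearity, and the projection formula applied to a resolution $\tilde S\to S$ to coincide with Mumford's pullback-based intersection product. I would therefore expect this identification of the two intersection products to be the main obstacle of a fully written-out proof; once it is granted, the Riemann--Roch identity reduces to the defining equation of $\ch_2^M$ from Section 2.
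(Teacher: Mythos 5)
Your proposal is correct and follows exactly the route the paper takes: the paper derives the corollary directly from the construction of $\ch^M$ in Section 2 together with \cite[Remark 4.5]{Ki} (for the $B^*$ statement over a perfect field) and \cite[Theorem 4.1]{Vi} (for the $A^*$ lift when the exceptional curves are rational), i.e., by inverting the evaluation maps just as you do. Your extra discussion of why the operational cap product against $K_S$ matches Mumford's intersection product is a reasonable elaboration of a compatibility the paper leaves implicit.
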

 
 \begin{Remark}
 	The author of \cite{Ki} claims that $B^*(S)_{\QQ}\to B_*(S)_{\QQ}$ is an isomorphism for any normal surface $S$ over any field. This would imply that we do not need assume in the above corollary that the base field is perfect. However, the paper \cite{Ki} mixes the notion of regularity with smoothness. This causes problems in various places of this paper. The author of \cite{Vi} is more careful and one can follow his proof of  \cite[Theorem 4.1]{Vi}  to reduce to the algebraically closed case when regularity and smoothness coincide.
 \end{Remark} 
 
 \section{Proof of Theorem \ref{main}}
 
 Let $f: \tilde S\to S$ be the minimal resolution of singularities of $S$. 
 By the Leray spectral sequence, for any coherent $\cO _{\tilde S}$-module $F$ we have
 $$\chi (S, Rf_*F)= \chi (\tilde S, F).$$
 In particular, if $E$ is a coherent reflexive $\cO_S$-module and $F$ is the reflexivization $(f^{* }E)^{**}$ of $f^*E$ then $f_*F=E$ and 
 $$\chi (S, E)= \chi (\tilde S, F)+h^0(S,R^1f_*F). $$
 Let $r$ be the rank of $E$ and let us recall that we set $c_1(f, F)= \ch_1^M(F)-f^*\ch_1^M(E)$. Then by the definition of Mumford's discriminant and the Riemann--Roch formula (see Section 2,  equation (\ref{RR})) we have
 \begin{align*} 
 	\int_{\tilde S} \Delta^M(F)-\int_{S} \Delta^M(E)=&2r(\chi (S, E)-\chi (\tilde S, F))+ 2r^2 (\chi (\tilde S, \cO_{\tilde X})-\chi (S, \cO_S))\\
 	&+(( \ch_1^M(F))^2-( \ch_1^M(E))^2)
 	+r( \ch_1^M(E).K_S- \ch_1^M(F).K_{\tilde S})\\
 	=&2rh^0(S,R^1f_*F)-2r^2h^0(S,R^1f_*\cO_{\tilde S})
 	+c_1(f, F)^2-r c_1(f, F).K_{\tilde S}.\\
 \end{align*}  
 Alternatively, the above formula follows from the more general computation in Subsection \ref{computation-ch^M}.
 Now \cite[Proposition 3.9 and Corollary 3.10]{La-Inters} imply that there exists a constant $D_S$ depending only on the local type of singularities of $S$ such that for every rank $r$ reflexive $\cO_S$-module $E$ we have
 $$\left|\int_S \Delta^M(E)-\int_{\tilde S} \Delta^M(F)\right| \le D_Sr^2.$$
If $S$ has only rational Gorenstein singularities then 
$$\int_{\tilde S} \Delta^M(F)=\int_{ S} \Delta^M(E)+c_1(f, F)^2\le \int_{S} \Delta^M(E).$$
Indeed, since $R^1f_*\cO_{\tilde S}=0$ we have $R^1f_*F=0$ (this follows, e.g., from \cite[Proposition 3.9]{La-Inters}). Vanishing of $c_1(f, F).K_{\tilde S}$ in this case follows from the equality $K_{\tilde S}=f^*K_S$.

 Together with usual Bogomolov's inequality in characteristic zero and \cite[Theorem 1.1]{Ko}  in positive characteristic, the above inequalities imply Theorem \ref{main} for reflexive sheaves. In general, if $E$ is a torsion free coherent $\cO_S$-module then
 $\chi (S, E^{**})\ge \chi (S, E)$ and $\ch_i (E)= \ch_i(E^{**})$ for $i=0,1$. This implies 
 $$\int_S \Delta^M(E)\ge \int_S \Delta^M(E^{**}),$$
 which proves  Theorem \ref{main}.
 
 \medskip
 
 \begin{Remark}
 The inequality
 $$\int_{\tilde S} \Delta^M(F)\le \int_{ S} \Delta^M(E)$$
 for a reflexive sheaf $E$ on a normal proper surface $S$ with at most rational Gorenstein singularities was first observed in
 \cite[(4.6)]{Th} following  \cite{La-MZ}. The above proof is essentially equivalent but simpler.
 
 The above inequality was also claimed in \cite[Proposition 3.7]{NS} but the proof depends on \cite[Lemma 3.5]{NS}, whose proof contains various mistakes. 
 \end{Remark}

 \begin{Remark}
 We expect that one can obtain a better inequality than the one in Theorem \ref{main} by replacing 
 Mumford's discriminant $\Delta^M(E)$ with the discriminant $\Delta(E)$ defined in \cite{La-Inters}. Unfortunately, the proof of this more precise inequality depends on \cite[Conjecture 8.2]{La-MZ}.
 \end{Remark}

\section{Bridgeland stability conditions on normal surfaces}

Let $S$ be a normal proper surface over an algebraically closed field. Let us fix arbitrary $\RR$-Weil divisors $B$ and $H$.
We define a group homomorphism $Z_{H,B}: K_0(S)\to \CC$, called the \emph{charge}, by 
$$Z_{H,B}(E):=-\int _S e^{-(B+iH)}\ch ^M(E)+\frac{C_{S} }{2}\ch_{0}^M(E).$$
Here $C_S$ is the constant from Theorem \ref{main} and
$\int _S e^{-(B+iH)}\ch ^M(E)$ is defined as the degree of the class 
$$e^{-(B+iH)}\ch ^M(E):=\ch _2^M(E)-\ch_1^M(E)\cdot (B+iH)+\frac{1}{2}\ch_0 ^M(E) \, (B+iH)^2 \in A_*(S)_{\CC}.$$
Note that for if $B$ and $H$ are not Cartier then this is not the same as the degree of 
$\ch ^M(E(-B-iH))$.

If $H$ is a numerically non-trivial nef $\RR$-Weil divisor then we can define the \emph{$H$-slope} of a (non-zero) torsion free coherent $\cO_S$-module $E$ by 
$$\mu_H(E)=\frac{\ch_1^M(E).H}{\ch_0^M(E)}.$$
This allows us to consider slope $H$-(semi)stability of torsion free sheaves.
Every coherent $\cO_S$-module $E$ admits a unique Harder--Narasimhan filtration
$$E_0\subset E_1\subset ...\subset E_m=E$$
characterized by the property that $E_0$ is torsion, all quotients $E_i/E_{i-1}$, $i=1,...,m$ are torsion free slope $H$-semistable and  
$$\mu_{\min, H}(E)=\mu (E_m/E_{m-1})< ...<\mu (E_1/E_{0})=\mu_{\max, H}(E). $$
To simplify notation, in case $E$ is torsion we set  $\mu_{\min, H}(E)=+\infty$.
We define a torsion pair $(\cT_{H,B}, \cF_{H,B}) $ in the category $\Coh (S)$ of coherent $\cO_S$-modules by setting
$$\cT_{H,B}:=\{ E\in \Coh (S):  \mu_{\min, H}(E)> B.H\}$$
and 
$$\cF_{H,B}:=\{ E\in \Coh (S):  \hbox{ $E$ is torsion free and } \mu_{\max, H}(E)\le B.H\} .$$
We also consider the tilted abelian category
$$\Coh ^{H, B}(S):= \langle \cF_{H,B}[1], \cT_{H,B}\rangle \subset D^b(S).$$

The proof of the following proposition follows that of  \cite[Lemma 6.2]{Br} and \cite[Corollary 2.1]{AB}.

\begin{Proposition}
If $H$ is numerically ample then the pair $(Z_{H,B}, \Coh ^{H, B}(S))$ is a Bridgeland slope function.
\end{Proposition}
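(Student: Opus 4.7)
The plan is to verify the two axioms of a Bridgeland slope function: for every nonzero $E\in \Coh^{H,B}(S)$ one has $\mathop{\rm Im} Z_{H,B}(E) \geq 0$, with $\Real Z_{H,B}(E) < 0$ whenever $\mathop{\rm Im} Z_{H,B}(E) = 0$. First I would split the charge into real and imaginary parts. Expanding $e^{-(B+iH)}$ in degree $\leq 2$ and using $(B+iH)^2 = B^2 - H^2 + 2i\,B\cdot H$, one obtains
$$\mathop{\rm Im} Z_{H,B}(E) = \ch_1^M(E)\cdot H - \ch_0^M(E)\,B\cdot H, $$
$$\Real Z_{H,B}(E) = -\ch_2^M(E) + B\cdot \ch_1^M(E) + \tfrac{1}{2} \ch_0^M(E)\bigl(H^2 - B^2 + C_S\bigr).$$

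Next I would use the short exact sequence $0 \to F[1] \to E \to T \to 0$ in the tilted heart with $F := \cH^{-1}(E) \in \cF_{H,B}$ and $T := \cH^0(E) \in \cT_{H,B}$. By additivity of $Z_{H,B}$ it suffices to show $\mathop{\rm Im} Z(T) \geq 0$ and $\mathop{\rm Im} Z(F) \leq 0$. Since $H$ is numerically ample, every curve $C\subset S$ satisfies $H\cdot C>0$, so any torsion subsheaf of $T$ contributes nonnegatively to $\ch_1^M(T)\cdot H$, while the torsion-free quotient of $T$ has all Harder--Narasimhan slopes $>B\cdot H$ by the definition of $\cT_{H,B}$; similarly, $\mu_{\max,H}(F)\le B\cdot H$ forces $\ch_1^M(F)\cdot H \leq \ch_0^M(F)\,B\cdot H$.

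If $\mathop{\rm Im} Z(E)=0$, both inequalities must be equalities. Numerical ampleness of $H$ then forces $T$ to be a $0$-dimensional torsion sheaf, and $F$ to be torsion-free slope $H$-semistable with $\mu_H(F)=B\cdot H$. The contribution of $T$ is $\Real Z(T) = -\ch_2^M(T) = -\mathrm{length}(T)\leq 0$, with equality only if $T=0$. For $F$ of rank $r>0$ I would complete the square to obtain the key identity
$$2r\,\Real Z(F) \;=\; \Bigl(\int_S \Delta^M(F) + C_S r^2\Bigr) \;-\; (\ch_1^M(F) - rB)^2 \;+\; r^2 H^2.$$
The first bracket is nonnegative by Theorem \ref{main} applied to the semistable torsion-free sheaf $F$. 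The class $\ch_1^M(F) - rB$ lies in $H^{\perp}\subset N(S)_{\RR}$ by the slope equality, so Hodge's index theorem on normal proper surfaces (recalled in the Preliminaries) yields $(\ch_1^M(F) - rB)^2 \leq 0$. Finally $H^2>0$ since $H$ is numerically ample, so $2r\,\Real Z(F) \geq r^2 H^2 > 0$. Combining, $\Real Z(E) = \Real Z(T) - \Real Z(F) < 0$ whenever $E\neq 0$.

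The main obstacle is the last step, where two independent deep inputs must cooperate: the normal-surface Bogomolov inequality (Theorem \ref{main}, whose constant $C_S$ is precisely calibrated to reappear inside $\Real Z$) and the Hodge index theorem on a possibly non-projective normal proper surface applied to the non-Cartier class $\ch_1^M(F) - rB$. It is the combination of these two facts with the strict positivity $H^2>0$ coming from numerical (rather than just nef) ampleness that produces the strict inequality $\Real Z(E)<0$ needed for the slope-function axiom.
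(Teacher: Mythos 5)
Your proof is correct and follows essentially the same route as the paper: the key step in both is rewriting $2r\,\Real Z_{H,B}$ as $\bigl(\int_S\Delta^M+C_Sr^2\bigr)-(\ch_1^B)^2+r^2H^2$ and invoking Theorem \ref{main} together with the Hodge index theorem on $H^{\perp}$ and $H^2>0$. The only difference is that you carry out explicitly the torsion-pair bookkeeping (the decomposition $F[1]\to E\to T$ and the reduction of the zero-imaginary-part case to $T$ zero-dimensional and $F$ semistable of slope $B.H$, which is where numerical ampleness enters for torsion sheaves) that the paper delegates to the cited argument of Arcara--Bertram.
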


\begin{proof}
For any $E\in \Coh (S)$ of positive rank we have 
\begin{align*} 
	\Re Z_{H,B}(E)&=-\int _S \ch _{2}^M(E) + \ch_1(E).B-\frac{\ch_0(E)}{2}(B^2-H^2)
	+\frac{\ch_{0}(E) }{2}C_{S}\\
	&=\frac{1}{2\ch_0(E)} \int _S \Delta ^{M}(E)- \frac{1}{2\ch_0(E)} \left( \ch_1^B(E)\right)^2+\frac{\ch_{0}(E)}{2}(H^2+{C_{S} }),
\end{align*} 
where $\ch_1^B(E)=\ch_1(E)-\ch_0(E)\, B$.

The proof of \cite[Corollary 2.1]{AB} goes through if we can show that for any such $E$
with $\mu_H(E)=B.H$ we have $\Re Z_{H,B}(E)>0$.
If $\mu_H(E)=B.H$ then $H. \ch_1^B(E)=0$, so by the Hodge index theorem $ \left( \ch_1^B(E)\right)^2\le 0$ (here we use only that $H$ is nef and $H^2>0$).
So Theorem \ref{main} implies that
$$\Re Z_{H,B}(E)\ge \frac{1}{2\ch_0(E)} \left(\int _S \Delta ^{M}(E) +C_{S}\, \ch_{0}(E)^2 \right)+ \frac{\ch_{0}(E)}{2}H^2 \ge \frac{\ch_{0}(E)}{2}H^2 >0.$$
\end{proof}

\begin{Remark}
	The fact that Theorem \ref{main} implies the above proposition is standard and contained in \cite{Ko}. Unfortunately, the proof of \cite[Theorem 6.6]{Ko} contains small computational errors. We sketched proof of the above proposition also to convince the reader that we do not need to use ampleness. In fact, the above part of the proof uses only the fact that $H$ is nef and big. The full strength of numerical ampleness is used in the omitted part of the proof, when dealing with torsion sheaves.
\end{Remark}

\medskip

Now we need to define a finite rank lattice $\Lambda$ and a surjective group homomorphism 
$v: K_0(S)\twoheadrightarrow \Lambda $ such that $Z_{H,B}$ factors through $v$.
To do so let us note that  $Z_{H,B}$ factors through 
$$K_0(S)\mathop{\longrightarrow}^{\ch^M}A_*(S)\to B_*(S).$$
We have canonical isomorphisms $\int_S: B_0(S)\to \ZZ$ and $B_2(S)\to \ZZ$ and by the generalized version of the theorem of the base (see \cite[Th\'eor\`eme 3]{Kah}),
$B_1(S)$ is a finitely generated $\ZZ$-module. So as $\Lambda$ we can take the image of 
the homomorphism $K_0(S)\to B_*(S)\to B_*(S)/\hbox{Torsion}\simeq \ZZ \oplus N(S)\oplus \ZZ$. 

In the following, the $\RR$-linear map $V=\Lambda\otimes \RR \to \CC$ induced by $Z_{H,B}: K_0(S)\to \CC$ is by abuse of notation also denoted by $Z_{H,B}$. 
It is possible to take $C_S$ satisfying Theorem \ref{main} so large that for every effective Weil divisor $D$ on $S$ we have
$$C_S (H.D)^2 +D^2\ge 0$$
(see \cite[Exercise 6.11]{MS} or \cite[Lemma 6.5]{Ko}). We fix such $C_S$ and
 we  consider the quadratic form $Q:V\to \RR$ defined by
$$ Q(\alpha):= \int _S\Delta^M(\alpha)+C_S(\ch_1^B (\alpha).H)^2$$
for $\alpha \in V$. This form is used to check the support property in 
 the following theorem:
 
\begin{Theorem}\label{explicit-stability-on-proper}
Let $S$ be a normal proper surface. Let $H$ be a numerically ample $\RR$-Weil divisor and $B$ an arbitrary $\RR$-Weil divisor. Then the pair $(Z_{H,B}, \Coh ^{H, B}(S))$ defines a geometric Bridgeland stability condition on $D^b(S)$. This stability condition satisfies the support property condition with respect to $(\Lambda, v)$.
\end{Theorem}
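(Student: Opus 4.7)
The plan is to verify the four conditions in the definition of a geometric Bridgeland stability condition satisfying the support property. I will establish them in the order: (i) $\Coh^{H,B}(S)$ is the heart of a bounded t-structure on $D^b(S)$; (ii) Harder--Narasimhan filtrations exist with respect to $Z_{H,B}$; (iii) the support property holds with respect to the quadratic form $Q$ on $V$; (iv) every skyscraper sheaf $k(x)$ is $\sigma$-stable of phase one.

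Step (i) is formal: because every coherent $\cO_S$-module admits a slope-Harder--Narasimhan filtration with respect to $H$, the pair $(\cT_{H,B},\cF_{H,B})$ is a torsion pair in $\Coh(S)$, so standard tilting yields the heart $\Coh^{H,B}(S)$ of a bounded t-structure on $D^b(S)$. The previous proposition shows that $(Z_{H,B},\Coh^{H,B}(S))$ is a stability function. For step (ii) I would check that $\Coh^{H,B}(S)$ is noetherian and that the image $v(K_0(S))\subset \Lambda$ is discrete; the latter is immediate because $\Lambda$ is a finitely generated free abelian group, while noetherianity follows from noetherianity of $\Coh(S)$ together with the fact that $\cT_{H,B}$ is defined by a strict slope inequality. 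Bridgeland's standard argument then produces Harder--Narasimhan filtrations for $Z_{H,B}$ on the tilted heart.

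The principal obstacle is step (iii). Following the strategy used in the smooth projective case, for any tilt-semistable object $E\in \Coh^{H,B}(S)$ I would analyze the cohomology sheaves $\cH^{-1}(E)\in \cF_{H,B}$ and $\cH^0(E)\in \cT_{H,B}$ and reduce the inequality $Q(v(E))\ge 0$ to statements about their slope-Harder--Narasimhan factors with respect to $H$. Each torsion-free slope $H$-semistable factor $F_i$ satisfies $\int_S\Delta^M(F_i)+C_S\ch_0^M(F_i)^2\ge 0$ by Theorem \ref{main}; the torsion part of $\cH^0(E)$ is controlled by the defining inequality $C_S(H.D)^2+D^2\ge 0$ for effective Weil divisors $D$. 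Summing these contributions over the filtration and using the Hodge index theorem on normal proper surfaces (established in Section 1.2) to handle cross terms on $H^\perp\subset N(S)_\RR$, the form $Q$ is non-negative on $v(E)$. The same Hodge index calculation shows that $Q$ is negative definite on $\ker Z_{H,B}$, which completes the support property with respect to $(\Lambda,v)$.

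For step (iv), any skyscraper $k(x)$ lies in $\cT_{H,B}$, has Mumford Chern character $(0,0,[x])$ and hence $Z_{H,B}(k(x))=-1$, so it belongs to the heart with phase one; any proper sub-object in $\Coh^{H,B}(S)$ would have to come from a quotient of $k(x)$ in $\Coh(S)$, and the only such non-trivial option has charge zero, which is excluded by the strict slope inequality defining $\cT_{H,B}$. The hardest part of the whole argument is the estimate in step (iii) for genuine two-term complexes where both $\cH^{-1}(E)$ and $\cH^{0}(E)$ are nonzero: the enlarged constant $C_S$ furnished by Theorem \ref{main}, chosen also to satisfy $C_S(H.D)^2+D^2\ge 0$, is precisely what permits the mixed cross terms between these two cohomology sheaves to be absorbed into a sum of manifestly non-negative expressions.
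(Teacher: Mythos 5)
Your outline is correct and follows exactly the route the paper itself takes: the paper omits the proof and defers to the smooth-surface argument of Macr\`i--Schmidt \cite[Sections 5 and 6]{MS}, and your four steps (tilting, Harder--Narasimhan property via noetherianity and discreteness of $v(K_0(S))$, the support property via Theorem \ref{main} together with the choice of $C_S$ satisfying $C_S(H.D)^2+D^2\ge 0$ and the Hodge index theorem, and stability of skyscraper sheaves) are precisely the components of that argument, with the normal-surface inputs substituted where needed.
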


We skip proof of this theorem as it is the same as in the smooth case and one can find a full account of the proof in \cite[Sections 5 and 6]{MS}.

\section*{Acknowledgements}

The author would like to thank E. Macri and S. Schr\"oer for some remarks. The author would also like to thank the referee for careful reading of the paper and the remarks.
 
The paper was written while the author was an External Senior Fellow at Freiburg Institute for Advanced Studies (FRIAS), University of Freiburg, Germany. The author would like to thank Stefan Kebekus for his hospitality during the author's stay in FRIAS.

The  author was partially supported by Polish National Centre (NCN) contract number 2021/41/B/ST1/03741. The research leading to these results has received funding from the European Union's Horizon 2020 research and innovation programme under the Maria Sk{\l}o\-do\-wska-Curie grant agreement No 754340.

\footnotesize

\end{document}